	\setlist[enumerate]{label={\normalfont(\alph*)}} 
	\titleformat{\section}[block]{\Large\bfseries\filcenter}{\thesection}{1em}{}
\let\oldbibliography\thebibliography
\renewcommand{\thebibliography}[1]{%
  \oldbibliography{#1}%
  \setlength{\itemsep}{-.5pt}%
}
\renewcommand*\thesection{\arabic{section}}
\numberwithin{equation}{section}
\numberwithin{table}{section}
\numberwithin{figure}{section}
\theoremstyle{plain}
\newtheorem{teo}[equation]{Theorem}
\newtheorem{lema}[equation]{Lemma}
\newtheorem{prop}[equation]{Proposition}
\theoremstyle{definition}
\newtheorem{ndef}[equation]{Definition}
\newtheorem{question}[equation]{Question}
\newtheorem{remark}[equation]{Remark}
\newcommand{\thistheoremname}{}
\newtheorem{genericthm}[equation]{\thistheoremname}
\newenvironment{para}[1]
  {\renewcommand{\thistheoremname}{#1}%
   \begin{genericthm}}
  {\end{genericthm}}
\newcommand{\thistheoremnames}{}
\newtheorem*{genericthms}{\thistheoremnames}
\newenvironment{para*}[1]
  {\renewcommand{\thistheoremnames}{#1}%
   \begin{genericthms}}
  {\end{genericthms}}
\let\expandafter\oldproof\csname\string\proof\endcsname
\let\oldendproof\endproof
\renewenvironment{proof}[1][\proofname]{%
  \oldproof[\upshape \bfseries #1:]%
}{\oldendproof}
\def\@makechapterhead#1{%
  \vspace*{50\p@}%
  {\parindent \z@ \raggedright \normalfont
    \interlinepenalty\@M
    \Huge\bfseries  \thechapter.\quad #1\par\nobreak
    \vskip 40\p@
  }}
\def \R {\mathbb{R}}
\def \Z {\mathbb{Z}}
\def \N{\mathbb{N}}
\def \D{\textup{D}}
\def \e{\varepsilon}
\def \d{\,\textup{d}}
\def \exc{\backslash}
\def \mc{\mathcal}
\def \hs{\hspace{0.5cm}}
\def \tp{\textup}
\def \mb{\mathbb}
\DeclareFontFamily{OMX}{MnSymbolE}{}
\DeclareSymbolFont{MnLargeSymbols}{OMX}{MnSymbolE}{m}{n}
\DeclareFontShape{OMX}{MnSymbolE}{m}{n}{
    <-6>  MnSymbolE5
   <6-7>  MnSymbolE6
   <7-8>  MnSymbolE7
   <8-9>  MnSymbolE8
   <9-10> MnSymbolE9
  <10-12> MnSymbolE10
  <12->   MnSymbolE12
}{}
\DeclareFontShape{OMX}{MnSymbolE}{b}{n}{
    <-6>  MnSymbolE-Bold5
   <6-7>  MnSymbolE-Bold6
   <7-8>  MnSymbolE-Bold7
   <8-9>  MnSymbolE-Bold8
   <9-10> MnSymbolE-Bold9
  <10-12> MnSymbolE-Bold10
  <12->   MnSymbolE-Bold12
}{}
\let\llangle\@undefined
\let\rrangle\@undefined
\DeclareMathDelimiter{\llangle}{\mathopen}%
                     {MnLargeSymbols}{'164}{MnLargeSymbols}{'164}
\DeclareMathDelimiter{\rrangle}{\mathclose}%
                     {MnLargeSymbols}{'171}{MnLargeSymbols}{'171}
\begin{document}

 \title{\LARGE \textbf{Numerical evidence towards a positive answer to
   Morrey's problem}}

\author[1]{{\Large Andr\'e Guerra}}
\author[2]{{\Large  Rita Teixeira da Costa\vspace{0.4cm}}}

\affil[1]{\small University of Oxford, Andrew Wiles Building, Woodstock Rd, Oxford OX2 6GG, United Kingdom \protect \\
{\tt{andre.guerra@maths.ox.ac.uk}}\vspace{1em}\ }

\affil[2]{\small 
University of Cambridge, Department of Pure Mathematics and Mathematical Statistics, Wilberforce Road, Cambridge CB3 0WA, United Kingdom
\protect \\
{\tt{rita.t.costa@dpmms.cam.ac.uk}}\
}

\date{}

\maketitle

\begin{abstract}
We report on numerical experiments suggesting that rank-one convexity
imples quasiconvexity in the planar case. We give a simple
heuristic explanation of our findings.
\end{abstract}

\unmarkedfntext{
\hspace{-0.78cm} \textbf{Keywords:} Quasiconvexity, Rank-one convexity, Gradient Young measures, Laminates.
\textbf{2010 MSC:} 49J45.
}


\vspace{0.2cm}

\section{Introduction}

An important problem in the vectorial Calculus of Variations is to
characterize the integrands $f\colon \R^{m\times n}\to \R$ for which
the functional
$$\mc F[u]\equiv \int_\Omega f(\D u(x))\d x, \qquad \tp{where } u\colon
\Omega\subseteq \R^n \to \R^m \tp{ and } m,n\geq 2,$$
is lower semicontinuous with respect to the weak
topology in an appropriate Sobolev space; this is the natural
condition for existence of minimizers through the Direct Method.

In his seminal work \cite{Morrey1952}, \textsc{Morrey}
recognized that the weak lower semicontinuity of $\mc F$ is essentially
equivalent to a weak notion of convexity, called quasiconvexity, on
$f$.
Despite many efforts in the last five decades, 
an explicit description of quasiconvex functions remains elusive:
for instance, there are fourth-order polynomials whose
quasiconvexity has been neither proved nor disproved.
Such a description would be relevant not only in the Calculus of
Variations but also in other areas of analysis \cite{Iwaniec2002,
  Kirchheim2016,  Milton2002, Tartar1979}.

Quasiconvexity has been mostly studied
in relation with polyconvexity \cite{Ball1977} and rank-one convexity;
these are respectively stronger and weaker notions that are much easier
to tackle \cite{Dacorogna2007}.
We will focus on the relation between quasiconvexity and
rank-one convexity. It is useful to consider certain classes of
measures that can be seen as being dual to
these notions: gradient Young
measures and laminates are, respectively, the probability measures that
satisfy Jensen's inequality with respect to quasiconvex and
rank-one convex functions; see
\cite{Muller1999a} for more details.

The following remains one of the main open problems in the Calculus of
Variations:

\begin{question}\label{qu:Morrey}
Are rank-one convex functions quasiconvex? Equivalently, let $\nu$ be a compactly supported gradient Young measure in  $\R^{m\times n}$; is $\nu$ a laminate?
\end{question}

Quesion \ref{qu:Morrey} is usually referred to as Morrey's problem.
It seems that \textsc{Morrey} himself was not sure about what the answer to
Question \ref{qu:Morrey} should be \cite{Morrey1952,Morrey1966}. A
fundamental example of \textsc{\v Sverák} \cite{Sverak1992a} shows that the answer is
negative if $m\geq 3$, $n\geq 2$ and more recently
\textsc{Grabovsky} \cite{Grabovsky2018} obtained
a different example when $m=8$ and $n=2$. \textsc{\v Sverák}'s example
is a polynomial of degree four; \textsc{Grabovsky}'s example, although
analytically more complicated, has the advantage of being
2-homogeneous and invariant under the right-action of $\tp{SO}(2)$.
Question \ref{qu:Morrey} remains open in the case of
low-dimensional targets, i.e.\ when $m=2$ and $n\geq 2$.
There is some partial evidence suggesting that the answer might be
positive in this case, see e.g.\ the
landmark results of \cite{Faraco2008, Harris2018, Kirchheim2008, Muller1999b}. However, and despite remarkable progress, it is by no means clear that
the answer should be positive even in low dimensions.

Since the analytic study of
quasiconvexity remains incredibly challenging,
it is natural to look for numerical evidence instead: that is the
goal of this note. In line with the
ideas from \cite{Sverak1992a}, we fix
a Lipschitz deformation $u\colon \mb T^n\to \R^m$ whose gradient has finite image
and we look for rank-one convex functions falsifying Jensen's
inequality with respect to $\D u$. Given such a deformation,
it suffices to consider the rank-one convex envelopes of functions of the form
$$
f(A)=
\begin{cases}
  g(A) & A \in [\D u]\\
  2 & \tp{otherwise }
\end{cases},
$$
where $[\D u]$ denotes the essential range of $\D u\in
L^\infty(\Omega,\R^{m\times n})$ and $g\colon [\D
u]\to [-1,1]$ is any function. In other words, for a deformation such
that $[\D u]$ is finite the task of looking for counterexamples is a
finite-dimensional problem. As a small technical remark we note that
it is important that $f$ only takes finite values; it is easy to build
examples of rank-one convex non-quasiconvex functions if the value
$+\infty$ is allowed \cite{Ball1990a}.

Arbitrary deformations can be approximated by linear
combinations of plane waves and we will consider finite sums of such
waves, see  Section
\ref{sec:deformations}.
Our choice is inspired by \textsc{James}'s interpretation of
\textsc{\v Sverák}'s example, where three waves suffice to build a counterexample.

Numerical searches
for counterexamples to Question \ref{qu:Morrey} have been undertaken before
\cite{Dacorogna1990a, Dacorogna1998, Gremaud1995}. The
strategy of these papers is the opposite from the one we pursue here: they fix a rank-one convex
function $f$ and look for deformations for which $f$ does
not satisfy Jensen's inequality. The main problem of this approach
is that explicit rank-one convex non-polyconvex
functions are rare and the available examples are relatively simple and have
many symmetries. Our procedure is much more general, since our
deformations are sampled randomly and the rank-one convex functions
have as little structure as possible.

By homogenization, the gradient of a Lipschitz
deformation $u\colon \mb T^n \to \R^m$ generates
a gradient Young measure, which has finite support if $[\D u]$ is
finite; thus our goal is to determine whether this measure is a
laminate. Hence, we are naturally led to consider:

\begin{question}[\cite{Kirchheim2003}]\label{qu:KMS}
Is there an effective algorithm to decide whether a given probability measure supported on a finite subset of $\R^{m\times n}$ is a laminate?
\end{question}

Deciding whether a given measure is a laminate is
difficult, as in principle one has to test Jensen's inequality with
all rank-one convex functions \cite{Pedregal1993}.
One possible way of circumventing this issue
is to consider just the extremal rank-one convex functions
\cite{Guerra2018}; however, the general structure of these functions
remains unclear. A different approach is to use a discretized
version of the \textsc{Kohn}--\textsc{Strang} algorithm \cite{Kohn1986a}
and in Section \ref{sec:Kohn-Strang} we show that this yields a partially
satisfying answer to Question \ref{qu:KMS}. We rely on the convergence
of approximations to the rank-one convex envelope,
which were proved in \cite{Bartels2004, Dolzmann1999, Dolzmann2000b,
  Oberman2017}, see also \cite{Zhang2008} for particular examples.
We remark that the related problem of calculating the rank-one convex hull of
a set still remains poorly understood, see \cite{Angulo2018} and the
references therein. 

To conclude the introduction, let us return to Question
\ref{qu:Morrey} and discuss the numerical results presented in this
note. We randomly sample deformations given by the sum of $N$ plane
waves, for some $N\in \{3,4,5\}$; the cases $N=1,2$ are not
interesting and for $N\geq 6$ we already have that $\# [\D u]\geq 64$,
so the space of functions $g\colon [\D u]\to[-1,1]$ becomes very high-dimensional.
Our approach finds many counterexamples, similar to the ones in \cite{Sverak1992a},
when $m\geq 3$; however, and despite sampling thousands of different
deformations, none where found
when $m=2$. In this last case,
we observe that on average it is easier to check that a given function
does not yield a counterexample to Question \ref{qu:Morrey} as $N$ increases.
We give a basic heuristic explanation of these findings: for
plane wave expansions, the rank-one geometry of the set $[\D u]$ is
drastically different in the cases $m=3$ and $m=2$ and, in the latter,
the geometry becomes much richer as $N$ increases. Our considerations
are inspired by the very interesting results of
\textsc{Sebestyén}--\textsc{Székelyhidi} \cite{Sebestyen2017}, where
the authors tap into this structure
to prove that no counterexamples arise when $m=2$ and $N=3$,
see also \cite{Pedregal1998}.

\begin{para*}{Acknowledgements}
  A.G. would like to thank Vladimír \v Sverák for a very interesting
  conversation about Morrey's problem and, in particular, for the
  suggestion of conducting a numerical experiment roughly along the
  lines of the one described in Section \ref{sec:experiment}. A.G. would also like to
  thank Jan Kristensen for countless hours of discussion about the
  problems addressed here. A.G. was sponsored by
  [EP/L015811/1]. R.TdC. was sponsored by [EP/L016516/1].
\end{para*}

\section{Deciding whether a measure is a laminate}\label{sec:Kohn-Strang}

In this section we discuss Question \ref{qu:KMS}: throughout $\nu$ is
a fixed probability measure with support on a finite set $K\subset
\R^{m\times n}$. In this section we use a discretized version of the \textsc{Kohn}--\textsc{Strang}
algorithm to show the following:

\begin{prop}\label{prop:semidecidable}
Let $\nu$ be a probability measure supported in a finite set of points
in $\R^{m\times n}$. The problem of deciding whether $\nu$ is a
laminate is semidecidable, i.e.\ there is an algorithm which
terminates in finite time with a positive answer if $\nu$ is a laminate.
\end{prop}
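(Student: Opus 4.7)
The plan is to semi-decide laminate-ness by \textbf{exhibiting a prelaminate representation} of $\nu$. Recall that a prelaminate is built from a single Dirac $\delta_{\bar\nu}$ by a finite sequence of rank-one splittings, so every prelaminate is automatically a laminate, and producing such a decomposition furnishes a certificate of laminate-ness.

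I would realise this by running the construction in reverse. Given $\nu = \sum_{i=1}^N \lambda_i \delta_{A_i}$, one enumerates the finite collection of rooted binary trees with $N$ labelled leaves. For each such tree, the labels of the internal nodes are uniquely determined as the weighted barycentres of the descendant leaves, and the tree encodes a valid prelaminate construction if and only if, at every internal node, the two children's labels differ by a matrix of rank at most one---a determinantal condition which can be checked in finite time. Equivalently, the procedure can be phrased as a depth-first reverse-merging search: at each step, pick two current atoms $A_i, A_j$ with $\tp{rank}(A_i - A_j) \leq 1$ and replace them by $(\lambda_i + \lambda_j)\delta_{B_{ij}}$, where $B_{ij} = (\lambda_i A_i + \lambda_j A_j)/(\lambda_i + \lambda_j)$. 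The search has depth bounded by $N-1$ and branching factor bounded by $\binom{N}{2}$, so it terminates in finite time; it outputs ``yes'' precisely when some tree/merge sequence passes every rank-one check, which correctly certifies that $\nu$ is a laminate.

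The hard part is the converse implication, needed to guarantee that the algorithm halts positively on \emph{every} laminate: one must show that any finitely supported laminate is already a prelaminate. Here I would invoke the convergence of Kohn--Strang approximations cited in the excerpt. The set of prelaminate weight vectors compatible with support $\supp\nu$ is a finite union (indexed by binary-tree shapes with $N$ labelled leaves) of closed semialgebraic subsets of the simplex, cut out by the polynomial rank-one constraints at each internal node; it is therefore closed. Combining this closedness with the dual characterisation ``$\nu$ is a laminate iff $Rf(\bar\nu) \leq \int f\,\d\nu$ for every continuous $f$'' and the uniform convergence of the Kohn--Strang iterates to the rank-one convex envelope $Rf$ on the rank-one convex hull of $\supp\nu$, one can approximate $\nu$ weak-$\ast$ by prelaminates whose support can be concentrated in $\supp\nu$; closedness then forces $\nu$ itself to be a prelaminate. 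Making this concentration step rigorous---in particular, ruling out approximating prelaminates whose extra atoms cannot be absorbed into $\supp\nu$---is the main technical obstacle and the place where the cited convergence results do the real work.
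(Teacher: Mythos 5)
Your algorithm is sound (a successful merge sequence certifies that $\nu$ is a laminate of finite order, hence a laminate), but it is not complete, and the gap you flag as ``the hard part'' is not a technical obstacle --- the statement you need is false. Not every finitely supported laminate is a prelaminate with atoms in $\supp \nu$. The standard counterexample is a $T_4$ configuration: four diagonal matrices $A_1,\dots,A_4\in\R^{2\times 2}$, e.g.\ $\tp{diag}(1,3)$, $\tp{diag}(3,-1)$, $\tp{diag}(-1,-3)$, $\tp{diag}(-3,1)$, satisfy $\tp{rank}(A_i-A_j)=2$ for all $i\neq j$, yet they support a nontrivial laminate, obtained as the weak-$*$ limit of staircase prelaminates whose auxiliary atoms carry mass tending to zero. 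Every binary tree with these leaves contains a cherry (an internal node both of whose children are leaves), and the corresponding rank-one check fails; so your reverse-merging search returns ``no'' on a genuine laminate and is therefore not a semidecision procedure. The attempted repair cannot succeed for the same reason: the set of prelaminates supported exactly on $\supp\nu$ may well be closed, but the approximating prelaminates do not belong to it --- their extra atoms are essential and cannot be absorbed into $\supp\nu$ --- so closedness buys nothing. The Kohn--Strang convergence results concern rank-one convex envelopes of functions, not supports of approximating measures, and do not address this point.

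The paper works on the dual side precisely to avoid this issue. By Pedregal's theorem, $\nu$ is a laminate if and only if $f^{\tp{rc}}(\overline\nu)\leq\langle\nu,f\rangle$ for every continuous $f$; Lemma \ref{lema:basiclema} reduces this to the compact, finite-dimensional family of tent functions $f_\delta$ determined by $g\colon K\to[-1,1]$; and the discretized Kohn--Strang iterates bound each $f_\delta^{\tp{rc}}$ from above and converge to it uniformly as the grid and the set of rank-one directions are refined (Proposition \ref{prop:envelopes}). Hence, when $\nu$ is a laminate, a sufficiently fine discretization verifies Jensen's inequality for all the test functions in finite time. If you want a certificate-based argument, the certificate must be a dual one --- a verified inequality against rank-one convex test functions --- rather than a primal decomposition of $\nu$, because the class of measures your decompositions can reach (laminates of finite order) is strictly smaller than the class of laminates, even among finitely supported measures.
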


To prove this, we will resort to \textsc{Pedregal}'s Theorem \cite{Pedregal1993}: $\nu$ is a laminate
if and only if
\begin{equation}
\label{eq:laminatetest}
f^\tp{rc}(\overline \nu)\leq \langle \nu, f\rangle, 
\hs \overline \nu \equiv \langle \nu, \tp{id}\rangle
\end{equation}
for all continuous $f\colon \R^{m\times n}\to \R$, where $f^\tp{rc}$
denotes the rank-one convex envelope of $f$.

\begin{lema}\label{lema:basiclema}
If $\nu$ is not a laminate there is $g\colon K\to [-1,1]$ such that, for $0< \delta<c(K,n,m)$ small enough, the continuous function $f_\delta\colon \R^{m\times n}\to [-1,2]$, defined by
\begin{equation}
\label{eq:deffdelta}
f_\delta(A) \equiv \begin{cases}
g(A_0)+\frac{2-g(A_0)}{\delta} |A-A_0| & \tp{if } |A-A_0|\leq \delta \tp{ for some } A_0 \in K \\
2 & \tp{otherwise},
\end{cases}\end{equation}
satisfies $f^\tp{rc}_\delta(\overline \nu)>\langle \nu, f^\tp{rc}_\delta \rangle $.
\end{lema}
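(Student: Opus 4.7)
My plan is to use Pedregal's criterion \eqref{eq:laminatetest} to extract a continuous test function witnessing that $\nu$ is not a laminate, and then to reshape it into the specific cone form \eqref{eq:deffdelta}, controlling the resulting loss in the strict inequality by a modulus-of-continuity estimate.

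Concretely, I would start from the fact that failure of \eqref{eq:laminatetest} yields a continuous $h\colon \R^{m\times n}\to \R$ with $h^{\tp{rc}}(\overline\nu) > \langle \nu, h\rangle$. After truncating $h$ from above at a large constant and applying an affine rescaling $h\mapsto ah+b$ with $a>0$ (which preserves strict envelope inequalities via $(ah+b)^{\tp{rc}} = a h^{\tp{rc}}+b$), I may assume $h\leq 2$ on $\R^{m\times n}$ and $h(K)\subseteq[-1,1]$ while keeping the strict inequality. Setting $g:=h|_K$ and imposing $\delta < \tfrac{1}{2}\min_{A\neq A'\in K}|A-A'|$ makes the balls $B_\delta(A_0)$ pairwise disjoint, so $f_\delta$ is a well-defined continuous function with range in $[-1,2]$ that coincides with $h$ on $K$.

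The heart of the argument is the pointwise estimate $f_\delta \geq h - \omega_h(\delta)$ on all of $\R^{m\times n}$, where $\omega_h$ is a modulus of continuity of $h$ on a compact neighborhood of $K$. Off $\bigcup_{A_0\in K}B_\delta(A_0)$ one has $f_\delta = 2 \geq h$, and inside $B_\delta(A_0)$ one has $f_\delta \geq g(A_0)=h(A_0)\geq h - \omega_h(\delta)$. Since the rank-one convex envelope is monotone and commutes with additive constants, this yields $f^{\tp{rc}}_\delta(\overline\nu) \geq h^{\tp{rc}}(\overline\nu) - \omega_h(\delta)$. Combined with the trivial bound $\langle \nu, f^{\tp{rc}}_\delta\rangle \leq \langle \nu, f_\delta\rangle = \langle \nu, h\rangle$ (using $f_\delta = h$ on $K$), the desired inequality $f^{\tp{rc}}_\delta(\overline\nu) > \langle \nu, f^{\tp{rc}}_\delta\rangle$ follows as soon as $\omega_h(\delta) < h^{\tp{rc}}(\overline\nu) - \langle \nu, h\rangle$.

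The main obstacle I anticipate is the initial truncation step: cutting $h$ from above can in principle shrink $h^{\tp{rc}}$, so one needs to verify that for a sufficiently large cutoff the strict inequality at $\overline\nu$ persists. A clean route is to use that $h^{\tp{rc}}(\overline\nu)$ is approximated by integrals $\langle \mu, h\rangle$ against compactly supported laminates $\mu$ with barycenter $\overline\nu$, so that choosing the cutoff larger than the maximum of $h$ on those supports leaves both $h^{\tp{rc}}(\overline\nu)$ and $\langle \nu, h\rangle$ unchanged; the subsequent affine rescaling is then routine, and the rest of the argument reduces to the envelope estimate above.
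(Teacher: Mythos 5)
Your overall skeleton (restrict a witness to $K$, show $f_\delta$ dominates it up to a small error, pass to envelopes, and use $\langle\nu,f_\delta^{\tp{rc}}\rangle\leq\langle\nu,f_\delta\rangle=\langle\nu,h\rangle$) is sound, and your modulus-of-continuity estimate inside the balls $B_\delta(A_0)$ is fine. The gap is exactly where you suspected it: the truncation. Your argument needs $h\leq 2$ on \emph{all} of $\R^{m\times n}$, since the envelope at $\overline\nu$ is an infimum over prelaminates that may be supported arbitrarily far from $K$, so the comparison $f_\delta\geq h-\omega_h(\delta)$ must hold globally. But the proposed justification that a large cutoff ``leaves $h^{\tp{rc}}(\overline\nu)$ unchanged'' runs the inequality the wrong way: writing $h_M=\min(h,M)$, the representation of $h^{\tp{rc}}(\overline\nu)$ as an infimum of $\langle\mu,h\rangle$ over finitely supported laminates only shows $h_M^{\tp{rc}}(\overline\nu)\leq h^{\tp{rc}}(\overline\nu)+\varepsilon$ once $M$ exceeds $\max_{\supp\mu}h$ for a near-optimal $\mu$ --- an \emph{upper} bound, which monotonicity already gives. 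What you need is a \emph{lower} bound on $h_M^{\tp{rc}}(\overline\nu)$, and truncation from above can destroy it permanently: take $h(A)=|A-A_0|$, which is convex, so $h^{\tp{rc}}=h$ and $h^{\tp{rc}}(\overline\nu)=|\overline\nu-A_0|$ can be positive; yet any rank-one convex function below $h_M$ is bounded above, hence constant along every rank-one line, hence globally constant and $\leq\inf h_M=0$, so $h_M^{\tp{rc}}\equiv 0$ for \emph{every} $M$. Thus for a general continuous Pedregal witness the strict inequality need not survive the cutoff, and the proof as written does not close.

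The paper sidesteps this entirely by choosing the witness to be rank-one convex from the start: ``$\nu$ is not a laminate'' means precisely that Jensen's inequality fails for some rank-one convex $\tilde g$, so one may take $h=\tilde g=h^{\tp{rc}}$ and there is no envelope value to preserve under modification. After rescaling so that $\tilde g([-2,2]^{mn})\subseteq[-1,1]$, the interior Lipschitz estimate for rank-one convex functions, $\tp{Lip}(\tilde g,[-1,1]^{mn})\leq\min\{m,n\}\,\tp{osc}(\tilde g,[-2,2]^{mn})$, plays the role of your modulus of continuity and yields $f_\delta\geq\tilde g$ for $\delta<(2\min\{m,n\})^{-1}$, whence $f_\delta^{\tp{rc}}\geq\tilde g$ and the strict inequality at $\overline\nu$ follows. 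If you replace your appeal to \eqref{eq:laminatetest} by this choice of witness, the remainder of your argument goes through essentially unchanged.
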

Although this is not needed, note that $\lim_{\delta\to 0}
f_\delta=f_0\equiv g \mathbbm 1_K + 2\times \mathbbm 1_{\R^{m\times n}\exc K}$ pointwise.

\begin{proof}
Since $\nu$ is not a laminate, there is $\tilde g\colon \R^{m\times
  n}\to \R$ rank-one convex and such that $g(\overline \nu)> \langle
\nu,g\rangle$; by scaling, we can assume that $\tilde
g([-2,2]^{mn})\subseteq [-1,1].$  Since $\tilde g$ is rank-one convex it is locally Lipschitz and, see \cite{Ball2000},
$$\tp{Lip}(\tilde g, [-1,1]^{mn})\leq \min\{m,n\} \tp{\,osc}(\tilde g, [-2,2]^{mn})\leq 2 \min\{m,n\}.$$
Let us take $g= \tilde g|_K$, $\delta\leq \frac 1 2 \min\{|X_1-X_2|: X_1, X_2 \in K\}$ so that $f_\delta$ is well-defined and, in addition, we require that $\delta<(2\min\{m,n\})^{-1}$. Thus, for $A\in B_\delta(A_0)$ and $A_0 \in K$, 
$$\tilde g(A) \leq \tilde g(A_0)+ 2 \min\{m,n\} |A-A_0| \leq \tilde g(A_0)+ \frac 1 \delta |A-A_0| \leq f_\delta(A).$$
This shows that $ f_\delta\geq \tilde g$; since $g$ is rank-one convex, also $f_\delta^\tp{rc}\geq \tilde g$ and so 
$$ f^\tp{rc}_\delta (\overline \nu)\geq \tilde g (\overline \nu) >
\langle \nu, g\rangle= \langle \nu, f_\delta^\tp{rc}\rangle. \qedhere$$
\end{proof}


%
%

Lemma \ref{lema:basiclema} shows that in order to decide whether $\nu$
is a laminate one has to explore the finite-dimensional space of
functions $g\colon K\to [-1,1]$. In order to compute an approximation of $f^\tp{rc}_\delta$ we use a discrete version of the \textsc{Kohn}--\textsc{Strang} algorithm \cite{Kohn1986a}.

\begin{para}{Algorithm}\label{alg:KS}
We fix $\delta>0$ small enough  so that we do not need to worry about
it; thus we drop the subscript $\delta$. By translation invariance we can assume that $\overline \nu=0$. Then:

\begin{enumerate}[label=\arabic*.]
\item Fix an odd integer $L$, consider the grid
$\mc G_L\equiv \frac{1}{L}\Z^{mn}\cap [-1,1]^{mn}$,
and choose a finite set of directions $\mc D$ consisting of rank-one matrices which are in $\mc G_L$.

\item Set $f^{\tp{rc},0}_{L,\mc D}:=f$ and, for $A\in \mc G_L$,

$$f^{\tp{rc},i+1}_{L,\mc D}(A)=
\min_{\substack{X \in \mc D: A\pm X \in \mc G_L}}\left\{\frac{f^{\tp{rc},i}_{L,\mc D}(A+X)+f^{\tp{rc},i}_{L,\mc D}(A-X)}{2}, f^{\tp{rc},i}_{L,\mc D}(A)\right\}.
$$

We terminate the algorithm if either the maximum difference between iterates stabilizes or $f^{\tp{rc},i}_{L,\mc D}$ satisfies Jensen's inequality with respect to $\nu$.
\end{enumerate}
\end{para}

Let $f^{\tp{rc},i}$ be the $i$-th Kohn--Strang iterate, i.e.\
$f^{\tp{rc},i}$ is defined inductively by $f^{\tp{rc},0}=f$ and
$$f^{\tp{rc},i}(A)=\inf \left\{\lambda f^{\tp{rc},i-1}(X)+ (1-\lambda) f^{\tp{rc},i-1}(Y): \lambda X + (1-\lambda)Y=A, \tp{rank}(X-Y)=1 \right\},$$
where $\lambda$ runs over $(0,1)$. Clearly,
for $A\in \mc G_L$,
$f^{\tp{rc},i}(A)\leq f^{\tp{rc},i}_{L,\mc D}(A)\leq f(A),$
and so if $f^{\tp{rc},i}_{L,\mc D}$ satisfies Jensen's inequality with respect to $\nu$, $f_\delta$ satisfies (\ref{eq:laminatetest}).
Conversely, we have:

\begin{prop}\label{prop:envelopes}
Let $f^{\tp{rc}}_{L,\mc D}\equiv \lim_{i\to \infty} f^{\tp{rc},i}_{L,\mc D}$. Then $f^{\tp{rc}}_{L,\mc D}$ converges uniformly to $f_\delta^\tp{rc}$ as  $L\to \infty$ and as the largest angle between any rank-one matrix and its best approximation in $\mc D$ goes to zero.
\end{prop}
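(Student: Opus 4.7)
The plan is to compare the discrete iterates $f^{\tp{rc},i}_{L,\mc D}$ with the classical Kohn--Strang iterates $f^{\tp{rc},i}$ defined after Algorithm \ref{alg:KS}, and to exploit that $f^{\tp{rc},i}\searrow f^{\tp{rc}}_\delta$ as $i\to\infty$.

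The easy direction $f^{\tp{rc},i}_{L,\mc D}(A)\geq f^{\tp{rc},i}(A)$ for $A\in \mc G_L$ is obtained by induction on $i$: each $1/2$-split along a direction in $\mc D$ is a particular admissible split in the definition of $f^{\tp{rc},i+1}$, and the infimum in the Kohn--Strang iteration is over a larger family. Letting $i\to\infty$ yields $f^{\tp{rc}}_{L,\mc D}\geq f^{\tp{rc}}_\delta$ on $\mc G_L$.

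For the reverse inequality I would rely on two facts. First, $f_\delta$ is bounded and Lipschitz with constant $C\leq 3/\delta$; a simple translation argument, exploiting that the set of rank-one matrices is translation-invariant, shows by induction on $i$ that every iterate $f^{\tp{rc},i}$, and therefore also $f^{\tp{rc}}_\delta$, is $C$-Lipschitz on $\R^{m\times n}$. Second, since each $f^{\tp{rc},i}$ is continuous, $f^{\tp{rc},i}\searrow f^{\tp{rc}}_\delta$ monotonically, and $f^{\tp{rc}}_\delta$ is continuous, Dini's theorem gives uniform convergence on the compact region outside of which all of these functions are identically $2$. Fixing $\epsilon>0$, I would then choose $i$ with $\|f^{\tp{rc},i}-f^{\tp{rc}}_\delta\|_\infty\leq \epsilon/2$.

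With this $i$ fixed, the remaining task is to prove $f^{\tp{rc},i}_{L,\mc D}(A)\leq f^{\tp{rc},i}(A)+\epsilon/2$ on $\mc G_L$, for $L$ large and $\mc D$ fine. I would argue by induction on $i$: given a near-optimal split $A=\lambda X+(1-\lambda)Y$ with $X-Y$ rank-one, I would mimic it by a binary tree of $1/2$-splits with directions in $\mc D$ and leaves in $\mc G_L$. Replacing the rank-one direction $X-Y$ by its best approximation in $\mc D$ introduces an error of size $O(C\theta|X-Y|)$; snapping intermediate nodes to the grid introduces $O(C/L)$; and simulating a generic weight $\lambda$ by a dyadic tree of depth $k$ introduces $O(C 2^{-k})$. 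Summing these controlled errors across the $i$ recursion levels gives the desired bound, provided $L$, $1/\theta$ and $k$ are taken large enough depending on $i$ and $\epsilon$.

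The main obstacle I anticipate is this last step: simultaneously realizing a generic $\lambda$-split as a tree of $1/2$-splits while keeping every intermediate node in $\mc G_L$ and every branching direction in $\mc D$, without accumulating uncontrolled error. It is precisely here that the uniform Lipschitz bound for all iterates $f^{\tp{rc},i}$ is essential, since otherwise errors could compound as the tree deepens. Once this step is handled, combining the upper and lower bounds established above yields the claimed uniform convergence.
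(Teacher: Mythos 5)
The paper does not actually prove this proposition: its ``proof'' is a citation to the convergence results of Oberman--Ruan (reference \cite{Oberman2017}), together with the observation that $f_\delta$ is continuous so those results apply. Your proposal instead sketches a direct argument, and its overall architecture is sound and consistent with how the cited proofs go: the comparison $f^{\tp{rc},i}_{L,\mc D}\geq f^{\tp{rc},i}$ on the grid is exactly the inequality already recorded in the paper; the uniform Lipschitz bound for all continuous Kohn--Strang iterates via translation of splits is correct; and Dini (or equi-Lipschitz plus monotonicity) does give $\|f^{\tp{rc},i}-f^{\tp{rc}}_\delta\|_\infty\leq\epsilon/2$ on the compact cube for $i$ large.

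The genuine gap is the step you yourself flag, and it is not a minor technicality --- it is the entire content of the cited theorems. Two concrete issues are left unresolved. First, realizing a near-optimal split $A=\lambda X+(1-\lambda)Y$ as a tree of symmetric half-splits requires every increment to be an element of $\mc D$, which is a \emph{finite set of rank-one grid matrices}: the hypothesis controls the angle to the best approximation but not the available step lengths, so a long split must be simulated by a chain of short grid steps, and a sum of rank-one matrices that are only approximately parallel is neither rank-one nor guaranteed to keep the intermediate nodes on $\mc G_L$; the drift off the true rank-one line must be shown not to compound across the $i$ recursion levels, and your error budget $O(C\theta|X-Y|)+O(C/L)+O(C2^{-k})$ asserts rather than proves this. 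Second, the discrete minimum is restricted to $X\in\mc D$ with $A\pm X\in\mc G_L$, whereas the continuous iterate $f^{\tp{rc},i}(A)$ may be attained by splits whose endpoints leave $[-1,1]^{mn}$; one must argue that such splits can be replaced by splits confined to the cube with negligible loss (plausible here because $f_\delta\equiv 2$ away from $K$, but it requires an argument you do not give). Since the paper delegates exactly these points to \cite{Oberman2017}, the honest conclusion is that your outline is a reasonable roadmap to that proof but does not constitute an independent one; citing the convergence theorem, as the paper does, or fully executing the tree construction, would be needed to close it.
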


For a proof see \cite{Oberman2017}. Note that we take $0<\delta<c(m,n,K)$ and that $f_\delta$ is continuous, so their results apply.
It is clear that combining Lemma \ref{lema:basiclema} with Proposition
\ref{prop:envelopes},  we deduce Proposition  \ref{prop:semidecidable}.

%
%

\section{Gradient Young measures versus laminates}
\label{sec:deformations}

In this section we adress Question \ref{qu:Morrey}. 
Recall that a function $f\colon \R^{m\times n} \to \R$ is said to be \textit{quasiconvex} if, for all $A\in \R^{m\times n}$, 
\begin{equation}
f(A)\leq \int_{\mb T^n} f(A+\D \varphi(x))\d x \hs \tp{ for all } \varphi
\in C^\infty(\mb T^n, \R^m).\label{eq:qc}
\end{equation}
Equivalently, $f$ is quasiconvex if and only if $f(\overline \nu)
\leq \langle \nu, f\rangle$, where $\nu$ is any compactly supported gradient Young
measure \cite{Kinderlehrer1991, Muller1999a}.

We want to test the inequality (\ref{eq:qc}) with deformations of the form
\begin{equation}
\varphi (x)=\sum_{i=1}^N a_i s(x \cdot n_i+ c_i),
\label{eq:defphi}
\end{equation}
where $N\in \N$, $a_i \in \R^m, n_i \in \Z^n$ are vectors, $c_i \in \R$ are
phases and $s$ is the 1-periodic sawtooth function, defined for $t\in
[0,1]$ by $s(t)=t1_{[0,1/2]}(t)+(1-t)1_{[1/2,1]}(t)$. The
idea of approximating an arbitrary deformation with a simplified
deformation with the form (\ref{eq:defphi})  is known in the Applied
Harmonic Analysis literature as a ridgelet expansion
\cite{Pinkus2015}. We remark as a somewhat inconvenient fact that
orthonormal ridgelet bases in $L^2$, just like Fourier series, are never unconditional bases in
$L^p$ for $p\neq 2$, although we do not prove this here.

The advantage of an expansion as in (\ref{eq:defphi}) is that, with $h\equiv s'$ being the Haar wavelet,
$$\D \varphi(x)=\sum_{i=1}^N h(x\cdot n_i + c_i)\, a_i \otimes n_i;$$
hence the gradient $\D \varphi$ takes values in a finite set. 
 In our context, considering plane-wave expansions as in
 (\ref{eq:defphi}) is a classical idea, and we are motivated by \textsc{James}' interpretation of \textsc{\v Sverák}'s example \cite{Muller1999a}, see also \cite[\S 31]{Milton2002} and \cite{Pedregal1998, Sebestyen2017}. Moreover, $\varphi$ generates a homogeneous gradient Young measure $\nu$, which takes the form
\begin{equation}
\nu = \sum_{\e \in \{-1,1\}^N} \nu_\e  \delta_{X_\e },\label{eq:defnu}
\end{equation}
where we defined the weights $\nu_\e $ and the matrices $X_\e $ as
\begin{equation}
\nu_\e \equiv |\{x\in \mb T^2:h(x\cdot n_i + c_i)=\e_i, i=1, \dots, N\}|, \hs X_\e \equiv \sum_{i=1}^N \e_i a_i \otimes n_i.
 \label{eq:defX}
\end{equation} 
Note that $\nu_\e $ depends on $n_i$ but not on $a_i$. Furthermore, the measure $\nu$ has barycentre zero. 








For the sake of conciseness, we introduce the following definition: 
 
\begin{ndef}\label{def:N-wave}
For $N\in \N$, we say that $f\colon \R^{m\times n}\to \R$ is $N$\textit{-wave quasiconvex at zero} if
$$f(0)\leq \sum_{\e \in \{-1,1\}^N} \nu_\e f(X_\e )$$
for all $(a_i, n_i, c_i) \in \R^{m}\times \Z^n\times \R$, where
$\nu_\e $ and $X_\e$ are defined by (\ref{eq:defX}). Moreover, $f$ is \textit{$N$-wave quasiconvex} if, for any $A\in \R^{m\times n}$, the function $f(\cdot - A)$ is $N$-wave quasiconvex at zero.
\end{ndef}

It seems that variants of this notion were studied in
\cite{Kalamajska2003} for $N=3,4$.  By periodicity, in Definition \ref{def:N-wave} we
can assume that $c_1= \dots= c_I=0$ where $I=\min\{n,N\}$. We have:

\begin{prop}\label{prop:anyN}
$f$ is quasiconvex if and only if it is $N$-wave quasiconvexity
for all $N$.
\end{prop}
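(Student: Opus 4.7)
The forward implication is routine. Fix $A \in \R^{m\times n}$, parameters $(a_i, n_i, c_i)_{i=1}^N$, and let $\varphi$ be as in (\ref{eq:defphi}). Then $\varphi \in W^{1,\infty}(\mb T^n, \R^m)$, and a mollification argument (apply (\ref{eq:qc}) to $\varphi \ast \rho_\eta \in C^\infty(\mb T^n, \R^m)$, use $\|\D(\varphi \ast \rho_\eta)\|_\infty \leq \|\D \varphi\|_\infty$, continuity of $f$, and dominated convergence as $\eta \to 0$) extends (\ref{eq:qc}) to Lipschitz test functions. Since $\D \varphi(x) = \sum_i h(x \cdot n_i + c_i)\, a_i \otimes n_i$ takes the values $\{X_\e\}_{\e \in \{-1,1\}^N}$ on the sets with masses $\{\nu_\e\}$ described in (\ref{eq:defX}),
$$
f(A) \leq \int_{\mb T^n} f(A+\D \varphi(x))\d x = \sum_\e \nu_\e f(A + X_\e),
$$
which is exactly $N$-wave quasiconvexity of $f$ at $A$.

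For the converse, fix $A \in \R^{m\times n}$ and $\varphi \in C^\infty(\mb T^n, \R^m)$; we must verify (\ref{eq:qc}). The plan is to construct a sequence $\varphi_N$ of deformations of the form (\ref{eq:defphi}) such that $\|\D \varphi_N\|_{L^\infty}$ is uniformly bounded and $\D \varphi_N \to \D \varphi$ pointwise a.e. The construction proceeds in two stages. First, truncate the Fourier expansion of $\varphi$ to $\varphi^{(M)} = \sum_{|k| \leq M} \hat \varphi(k) e^{2\pi i k \cdot x}$; smoothness of $\varphi$ yields $\varphi^{(M)} \to \varphi$ in $C^1$ as $M \to \infty$. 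Second, for each fixed $k \in \Z^n$, the trigonometric modes in direction $k$ are one-dimensional functions of the scalar $k \cdot x \in \mb T$, so they can be approximated by finite dyadic sums $\sum_\ell b_\ell\, s(2^\ell k \cdot x + c_\ell)$ with $2^\ell k \in \Z^n$. This is the Haar/Schauder expansion applied to the derivative: the sums converge uniformly, their derivatives $\sum_\ell b_\ell 2^\ell\, h(2^\ell k \cdot x + c_\ell)$ are uniformly bounded in $L^\infty$, and they converge to the target derivative pointwise a.e.\ (for instance, by the Lebesgue differentiation theorem).

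Given this approximation, $N$-wave quasiconvexity applied to each $\varphi_N$ yields
$$
f(A) \leq \sum_\e \nu^{(N)}_\e f(A + X^{(N)}_\e) = \int_{\mb T^n} f(A + \D \varphi_N(x))\d x.
$$
By continuity of $f$ and the uniform $L^\infty$ bound on $\D \varphi_N$, the integrand is bounded and converges a.e.\ to $f(A + \D \varphi(x))$. Dominated convergence gives $\int f(A + \D \varphi_N)\d x \to \int f(A + \D \varphi)\d x$, whence $f(A) \leq \int_{\mb T^n} f(A + \D \varphi(x))\d x$, which is (\ref{eq:qc}).

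The main obstacle is the construction of the plane-wave approximation with simultaneous uniform $L^\infty$ gradient bounds and pointwise a.e.\ gradient convergence. As the paper itself warns, orthonormal ridgelet bases are not unconditional in $L^p$ for $p \neq 2$, so one cannot simply invoke an abstract $L^p$ basis expansion. The dyadic Haar construction sketched above sidesteps this issue because Haar partial sums of Lipschitz targets converge uniformly with uniformly bounded partial-sum derivatives tending pointwise a.e.\ to the target derivative, which is precisely the mode of convergence required by the dominated convergence step.
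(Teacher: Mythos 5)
Your proof is correct and follows essentially the same route as the paper: both directions reduce to approximating an arbitrary test function by finite sawtooth plane-wave sums of the form (\ref{eq:defphi}) and passing to the limit in the quasiconvexity inequality. The only difference is that the paper invokes strong $W^{1,\infty}$ convergence of such approximants as a standard fact (citing a quantitative ridge-approximation result), whereas you build the approximation explicitly via Fourier truncation plus dyadic interpolation of each ridge profile and pass to the limit with the weaker combination of uniform gradient bounds, a.e.\ gradient convergence and dominated convergence, which works equally well.
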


\begin{proof}
 We prove that if $f$ is $N$-wave quasiconvex at zero it is
 quasiconvex at zero, as the converse is clear.
 We rely on the
following standard fact: for $\varphi\in C^\infty(\mb T^n,\R^m)$ there is a sequence
$\varphi_j$ of the form (\ref{eq:defphi}) which converges to $\varphi$
strongly in $W^{1,\infty}(\mb T^n,\R^m)$.
For a quantitative version of this fact when $m=1$ see e.g.\
\cite{Gordon2002}, although there the authors take several
different functions $s_i$, for $i=1, \dots N$, instead of a fixed
sawtooth function; regardless, any $s_i$ can be approximated by scaled and
translated copies of $s$. The general case $m>1$ follows by
straightforward argument and we omit it.

Let $\nu_j$ be the gradient Young measure generated by the deformation
$\varphi_j$;
by assumption,
$$f(0)=f(\overline \nu_j)\leq \langle \nu_j, f\rangle=\int_{\mb T^n}
f(\D \varphi_j)\d x.$$
Since $\varphi_j\to \varphi$ in $W^{1,\infty}(\mb T^n,\R^m)$, we see
that $f(0)\leq \int_{\mb T^n} f(\D \varphi)\d x.$ Thus $f$ is quasiconvex at zero.
\end{proof}

The following theorem gathers several results from the literature.

\begin{teo}\label{teo:Nwave}
  $N$-wave quasiconvexity has the following properties:
  \begin{enumerate}
   \item \label{it:1-wave} 1-wave quasiconvexity is equivalent to rank-one 
convexity;
\item \label{it:2-wave} 2-wave quasiconvexity is equivalent to rank-one convexity;
\item \label{it:sverak}if $m\geq 3, n\geq 2$ then 3-wave quasiconvexity is different from rank-one convexity and is a nonlocal property;
\item \label{it:sze} if $m=n=2$ then 3-wave quasiconvexity is implied by rank-one convexity.

  \end{enumerate}
\end{teo}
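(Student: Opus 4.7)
The plan is to address the four items in turn, combining direct computation with appeals to the literature. For (\ref{it:1-wave}), a single wave $\varphi(x)=a\,s(x\cdot n+c)$ has gradient taking the values $\pm a\otimes n$ with equal weight $\tfrac12$, so 1-wave quasiconvexity at $A$ reads $2f(A)\leq f(A+a\otimes n)+f(A-a\otimes n)$. This is midpoint convexity of $t\mapsto f(A+t\,a\otimes n)$ along every rank-one line; for continuous (or even Borel) $f$ this is equivalent to convexity, so $f$ is rank-one convex.

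For (\ref{it:2-wave}), it suffices to show that every 2-wave Young measure (\ref{eq:defnu}) is a laminate, since rank-one convexity then yields Jensen's inequality. If $n_1,n_2$ are linearly independent, equidistribution on $\mb T^n$ forces $\nu_\e=\tfrac14$ for every $\e\in\{-1,1\}^2$, and $\nu$ decomposes as a second-order laminate: first laminate the pairs $(X_{++},X_{+-})$ and $(X_{-+},X_{--})$ along the rank-one direction $a_2\otimes n_2$, producing atoms at the midpoints $\pm a_1\otimes n_1$; then laminate these along $a_1\otimes n_1$. If instead $n_1,n_2$ are collinear, every $X_\e$ lies in a fixed rank-one subspace $\{v\otimes n:v\in\R^m\}$, and within such a subspace any two matrices differ by a rank-one matrix, so $\nu$ is automatically a laminate.

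For (\ref{it:sverak}), the inequivalence with rank-one convexity is James's reading \cite{Muller1999a} of \v Sverák's counterexample \cite{Sverak1992a}: a rank-one convex quartic on $\R^{3\times 2}$ that violates Jensen's inequality against an explicit 3-wave Young measure, extending to $m\geq 3$, $n\geq 2$ by padding with zeros. Nonlocality follows from Kristensen's argument for the nonlocality of quasiconvexity, which perturbs a known counterexample; since \v Sverák's example already lives in the 3-wave world, the same perturbative construction applies verbatim. Finally, (\ref{it:sze}) is a direct invocation of the main theorem of Sebestyén--Székelyhidi \cite{Sebestyen2017}.

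The main obstacle I anticipate is the weight bookkeeping in (\ref{it:2-wave}), which requires the case split into linearly independent and collinear $n_1,n_2$, together with identifying a clean reference for the nonlocality claim in (\ref{it:sverak}); the remaining steps reduce to invoking established results.
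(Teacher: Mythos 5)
Your proposal is correct and, for items \ref{it:1-wave}, \ref{it:sverak} and \ref{it:sze}, follows the same route as the paper, which proves the theorem almost entirely by citation. The genuine difference is item \ref{it:2-wave}: where the paper simply invokes \cite[Lemma 2.1]{Sebestyen2017}, you supply a self-contained proof, and it is sound --- for linearly independent $n_1,n_2\in\Z^n$ the map $x\mapsto (x\cdot n_1+c_1,\,x\cdot n_2+c_2)$ pushes Haar measure on $\mb T^n$ forward to Haar measure on $\mb T^2$ (its nonzero Fourier coefficients vanish precisely because $k_1n_1+k_2n_2=0$ forces $k_1=k_2=0$), so indeed $\nu_\e=\tfrac14$ and your two-step lamination through $\pm a_1\otimes n_1$ is exactly the required decomposition; in the collinear case the support lies in the subspace $\{v\otimes n_1\}$, on which every direction is rank-one, so a rank-one convex function is convex there and Jensen holds regardless of the (possibly unequal) weights. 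This buys the reader an explicit argument at the cost of the minor bookkeeping you flag; the paper buys brevity. Two small caveats. First, in \ref{it:1-wave} the single-wave inequality only gives midpoint convexity along rank-one lines with \emph{rational} normal direction $n\in\Z^n$, so besides the continuity you already invoke one needs a density argument to reach all rank-one lines; this is harmless for the continuous integrands at stake but worth stating. Second, for the nonlocality claim in \ref{it:sverak} the paper is more guarded than you are: it asserts that the arguments of \cite{Kristensen1999a} require an \emph{adaptation} rather than applying verbatim, since Kristensen's theorem concerns quasiconvexity itself and one must check that his perturbative construction can be carried out entirely within the class of $3$-wave test deformations; your sentence papers over that verification.
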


\begin{proof}
\ref{it:1-wave} follows straightforwardly, \ref{it:2-wave} follows by
\cite[Lemma 2.1]{Sebestyen2017},
\ref{it:sverak} follows from the example in \cite{Sverak1992a}
together with an adaptation of the arguments in
\cite{Kristensen1999a} and \ref{it:sze} is the main result of \cite{Sebestyen2017}.
\end{proof}

\section{Counting rank-one connections}\label{sec:counting}

The behaviour of gradients of maps changes dramatically from
the higher-dimensional to the planar case \cite{Faraco2008, Iwaniec2002a,
  Kirchheim2008}.
One of the basic explanations for this difference is that the relative size of the cone
$$\Lambda\equiv \{A\in \R^{m\times 2}: \tp{rank}\,A\leq 1\}$$
is much larger
when $m=2$ than when $m\geq 3$: for instance, it separates the
matrix space into two components in the former case.

The previous insight is also relevant towards the goal of
understanding the behaviour of the particular deformations of Section \ref{sec:deformations}.
In fact, the proof of Theorem \ref{teo:Nwave}\ref{it:sze} in
\cite{Sebestyen2017} also explores the fact that $\Lambda$ is large: using arguments somewhat in the spirit of
\cite{Szekelyhidi2005}, the abundance of rank-one
connections is used to build complicated laminates supported in the 3-cube
$\{X_\e\}_{\e\in \{-1,1\}^3}$. In view of Proposition \ref{prop:anyN}
it is natural to ponder what can be said for a general $N>3$.

In this section our goal
is to roughly quantify the number of rank-one connections
between points in the lamination hull of the $N$-cube. Our observations are merely heuristic, i.e.\ we do not
provide any proofs, and they are the consequence of analysing
thousands of computer-generated random configurations.


For a given choice of matrices $X_\e$ as in
(\ref{eq:defX}), let us write $$K_N\equiv \{X_\e:\e\in
\{-1,1\}^N\}\subset \R^{m\times 2}, \qquad Q_N\equiv [-1,1]^N\subset \R^N.$$
We can visualise $K_N$ as the
vertices of the $N$-cube $Q_N$ by considering the map $X_\e\mapsto
\e$. Note, however, that for $N>2m$
the map $\e\mapsto X_\e$ cannot be an embedding.

Let us denote by $K_N^{\tp{lc},i}$ the usual $i$-th lamination
convex hull of $K_N$, see 
\cite{Muller1999a} for the definition. Since the
edges of the cube correspond to rank-one segments, it is clear
that, under the above identification, $K_N^{\tp{lc},i}$ contains the
$i$-skeleton of $Q_N$: for instance, $K_N^{\tp{lc},1}$ contains
the edges of the cube, $K_N^{\tp{lc},2}$ contains the faces, and so
on.

We say that $X_\e$ and $X_{\e'}$ are neighbours if $\e$ and $\e'$
are adjacent vertices in $Q_N$. Generically,
each vertex $X_\e$ is rank-one connected only to its $N$
neighbours and thus $K_N^{\tp{lc},1}$ is in fact the 1-skeleton of the
$N$-cube, i.e.\ it consists of the vertices and the edges $E_N\equiv
K_N^{\tp{lc},1}\exc K_N$; note that each edge is an open segment
parallel to a rank-one line.

We now want to compare $K_N^{\tp{lc},2}$ with the 2-skeleton of the
$N$-cube. We call a
rank-one connection  \textit{trivial} if it exists in the 2-skeleton
of the $N$-cube. A vertice is trivially connected to the $N$
edges that have that vertex as one of their endpoints. An edge, which
we write in the form $\{(\e_1, \dots, \e_{i-1}, t, \e_{i+1}, \dots, \e_N): t\in [0,1]\}$, is
trivially connected to the $N-1$ edges that arise by flipping the sign
of one of the $\e_j$, for $j \neq i$.

Let us consider a fixed deformation. We count, for each vertice and
each edge, the
number of non-trivial edges to which it is rank-one connected; thus we
find two vectors of natural numbers, one with length $N$, the other
with $N 2^{N-1}$.
For these vectors we calculate their mean deviation. Finally, sampling
randomly thousands of deformations, we get approximate values for the
average number of connections, see Tables \ref{table:1} and
\ref{table:2}.

\begin{table}[htbp]
    \centering
    \begin{tabular}[h]{|r|c|c|}
      \hline
      & $m=2$ & $m=3$ \\
      \hline
      $N=3$ & 0.95 (0.47) & 0 (0)  \\\hline
      $N=4$ & 4.79 (1.57) & 0 (0)  \\\hline
      $N=5$ & 15.59 (3.65) & 0 (0) \\\hline
      $N=6$ & 41.70 (8.31) & 0 (0)\\\hline
    \end{tabular}
    \vspace{0.3cm}
    \captionsetup{width=.75\linewidth}
    \caption{Average number (and average mean deviation) of the number of
        non-trivial edges to which a vertice is rank-one connected.}
  \label{table:1}
  \end{table}
  
    \begin{table}[htbp]
    \centering
    \begin{tabular}[h]{|r|c|c|}
      \hline
      & $m=2$ & $m=3$ \\
      \hline
      $N=3$ & 2.90 (0.63) & 0 (0)  \\\hline
      $N=4$ & 12.50 (2.39) & 0 (0)  \\\hline
      $N=5$ & 36.78 (7.06) & 0 (0)  \\\hline
      $N=6$ & 92.17 (18.28)  & 0 (0) \\\hline
    \end{tabular}
    \vspace{0.3cm}
    
    \captionsetup{width=.75\linewidth}
    \caption{Average number (and average mean deviation) of the number of
        non-trivial edges to which an edge is rank-one connected.}
     \label{table:2}
   \end{table}

   \begin{remark}\label{rem:tables}
     We would like to make a few points concerning Tables
     \ref{table:1} and \ref{table:2}:
     \begin{enumerate}
     \item\label{it:probabilistic} The values obtained should be
       understood in a probabilistic sense: it is not true that, when
       $m=3$, there are never non-trivial connections. In fact, if we
       randomise vectors $a_i\in (\Z\cap[-L,L])^3, n_i\in
       (\Z\cap[-L,L])^2$ with $L$ a small number, say $L=5$, then we
       find non-trivial rank-one  connections in many of the
       corresponding configurations.
     \item The low average mean deviations in the tables show that the
       connections are not concentrated in a few vertices or edges;
       see also Figure \ref{fig}.
     \item When $m=2$, an increase in $N$ also increases the number of
       connections dramatically.  Thus, although the set $K_N$ becomes
       exponentially more complicated as $N$ increases, the geometry
       of its rank-one lines also becomes much richer.
     \end{enumerate}
   \end{remark}

\begin{remark}
Rank-one lines are very fragile: even if sometimes rank-one connections exist, they are
easily destroyed by small perturbations \cite{Kirchheim2001}. It is therefore more
appropriate to consider the
rank-one convex hull, which is often much larger than the
lamination convex hull, albeit it is also much more difficult to
calculate.
\end{remark}

What we find the most remarkable about Tables \ref{table:1} and \ref{table:2} is not
the fact that there are almost no rank-one connections when
$m=3$ but rather that there are so many connections when
$m=2$. Thus, in low-dimensions, simple lamination seems to be a viable
option to produce very complex gradients.
We believe that Tables
\ref{table:1} and \ref{table:2}
can be taken as partial
evidence towards a positive answer to Question \ref{qu:Morrey} when
$m=n=2$.

\begin{figure}[htbp]
  \centering
  \begin{subfigure}[b]{0.4\textwidth}
    \centering
    \includegraphics[width=0.9\textwidth]{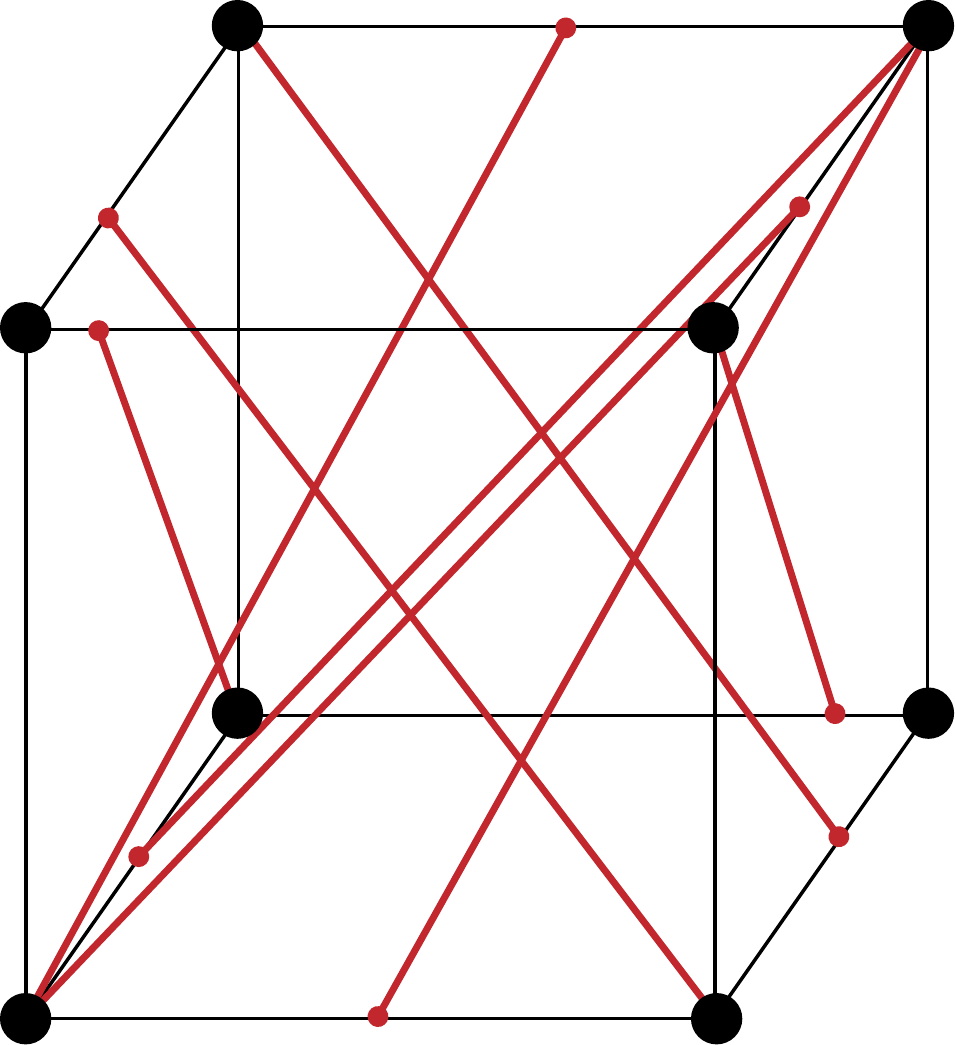}
    \caption{Non-trivial vertice-edge connections.}
  \end{subfigure}%
  ~
  \begin{subfigure}[b]{0.4\textwidth}
    \centering
    \includegraphics[width=0.9\textwidth]{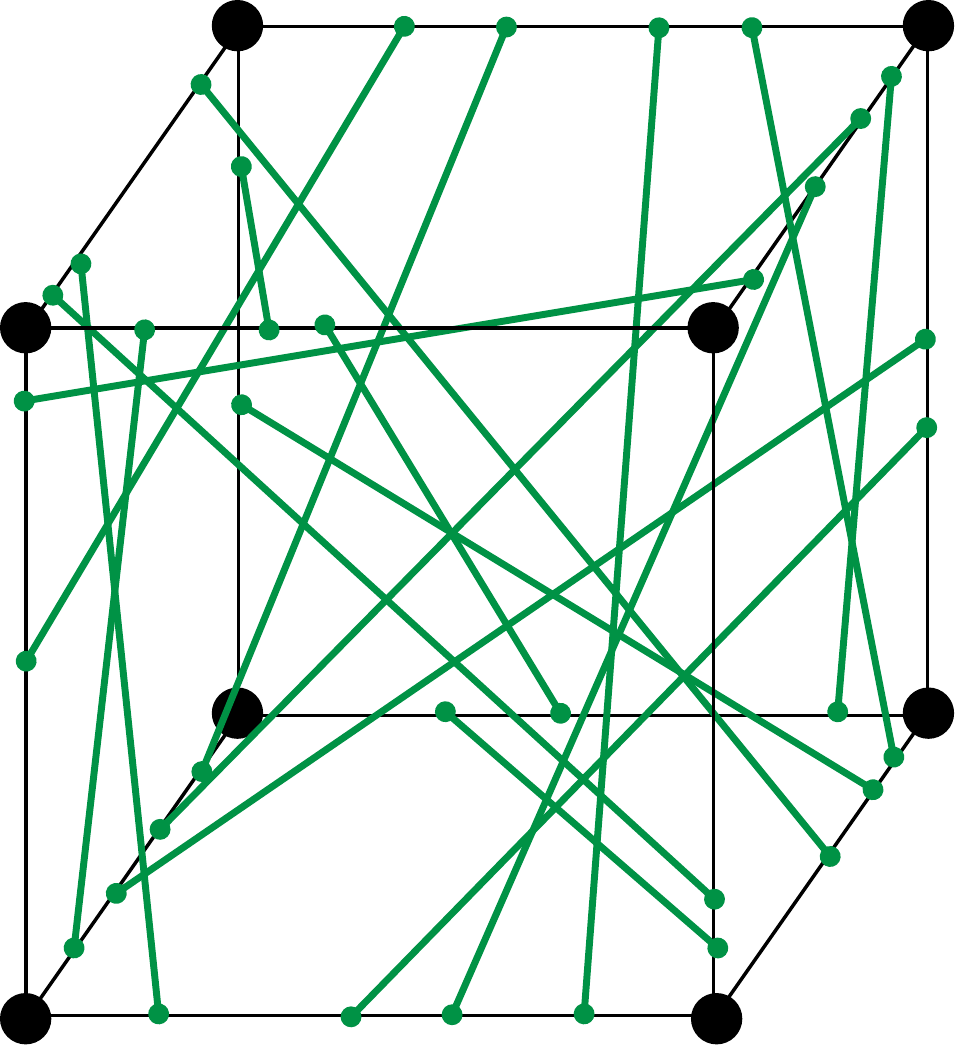}
    \caption{Non-trivial edge-edge connections.}
  \end{subfigure}%
   \captionsetup{width=.8\linewidth}
  \caption{Depiction of a ``typical'' configuration when $N=3$ and
    $m=2$, with an average of 1 vertice-edge connection per vertex and
    3 edge-edge connections per edge. Each line denotes the existence
    of at least one point in the edge which is rank-one
    connected.}
\label{fig}
\end{figure}


\section{Numerical search for counterexamples to Morrey's problem}
\label{sec:experiment}

In this section we report on numerical experiments which bring
together Sections \ref{sec:Kohn-Strang} and \ref{sec:deformations}. Our goal was
to find numerical evidence towards a resolution of Question
\ref{qu:Morrey}.

We set $n=2$ and run the following algorithm:

\begin{para}{Algorithm} \label{alg:numerical-search}

Fix $L,N\in \N$, with $L$ odd and sufficiently large, and a threshold $\gamma \in[0,1]$. Set  $\mc G_L\equiv \frac{1}{L}\Z^{2m}\cap [-1,1]^{2m}$ and choose a finite set of directions $\mc D$ consisting of rank-one matrices which are in $\mc G_L$. Then:

\begin{enumerate}[label=\arabic*., ref={\arabic*}]
\item \label{it:1n} Randomly generate a set of directions,
  $(n_i)_{i=1}^N$ in $([-L,L]\cap \Z)^2$. Check that $n_i$ and $n_j$ are linearly independent for $i\neq j$; if not, repeat the previous instruction.
\item \label{it:1c} For each $(n_i)_{i=1}^N$, randomly generate a set
  of phases, $(c_i)_{i=1}^N\in\mathbb{R}$. The set $(n_i,
  c_i)_{i=1}^N$ determines the weights, $(\nu_\e)_\e$, at each point
  in the support of the measure, see (\ref{eq:defX}).
\item \label{it:1a} Randomly generate a set of vectors $(a_i)_{i=1}^N$
  in $(\frac{1}{L} \Z \cap [-1,1])^m$. Check that $a_i\neq 0$ for all $i$ and that the matrices $X_\e$ where the measure is supported, defined in (\ref{eq:defX}) in terms of $(a_i, n_i)_{i=1}^N$, are in $\mc G_L$; if not, repeat the previous instruction.
\end{enumerate}

\noindent Repeat Step \ref{it:1n} a number $M_n$ of times; for each of
those, repeat Step \ref{it:1c} $M_c$ times; and for each
$(n_i,c_i)_{i=1}^N$, generate $M_a$ different sets $(a_i)_{i=1}^N$ by
\ref{it:1a}. We thus obtain $M_\nu\equiv M_n\times M_c\times M_a$ sets 
 $(a_i, n_i, c_i)_{i=1}^N\in \Z^m\times \Z^2\times \R$, each of which
 defines a measure $\nu$ supported on $\mc G_L$, see
 (\ref{eq:defnu}). Then, for each such $\nu$, we execute the following:

\begin{enumerate}[label=\arabic*., ref={\arabic*}]
\setcounter{enumi}{3}
\item\label{it:3} Randomly generate vectors in $g\in [-1,1]^{2^N}$ and, for each such vector, define a function $f$ as in (\ref{eq:deffdelta}).

\item \label{it:4} Apply the Kohn--Strang algorithm, as described in
  Algorithm \ref{alg:KS}, to calculate the approximation $f_{L,\mc
    D}^\tp{rc}(0)$ of $f^\tp{rc}(0)$.

  \item Check whether $f_{L,\mc D}^\tp{rc}(0)-\langle \nu,
    g\rangle>\gamma$.
If so, pick another measure of those generated in Steps \ref{it:1n}-\ref{it:1a} and 
go back to Step \ref{it:3}.
    If not, and if this step hasn't been performed more than $M_g$
  times, using the same measure $\nu$, go back to Step \ref{it:3}.
\end{enumerate}

The measure $\nu$ is \textit{suspicious} if it seems that
Jensen's inequality fails, i.e.\ if at least one $g$ generated in Step
\ref{it:3} is such that $f_{L,\mc D}^\tp{rc}(0)-\langle \nu,g\rangle >0$.  Suspicious measures are further examined:
\begin{enumerate}[label=\arabic*., ref={\arabic*}]
\setcounter{enumi}{6}
\item \label{it:5} For each suspicious pair $(\nu,g)$, make the
  changes $(L,\mc D)\mapsto (L',\mc D')$, where $L'=2L-1$ and $\#\mc
  D\leq \# \mc D'$, and rerun Step \ref{it:4}. Repeat the previous instruction as needed.
    
\end{enumerate}
\end{para}

\begin{remark}
Note that the the parameter $\gamma$ ensures that, in Step \ref{it:3},
one keeps looking for $g$'s until one finds a sufficiently suspicious
measure; we typically took $\gamma =0.1$ and we note that in
\textsc{\v Sverak}'s example $f^\tp{rc}(0)-\langle \nu, f\rangle \approx
\frac 1 4$. In fact, suppose that $0<f_{L,\mc D}^\tp{rc}(0)-\langle
\nu,g\rangle\ll 1$; when refining the approximation of $f^\tp{rc}(0)$ as in Step \ref{it:5} it is likely that one finds $f_{L',\mc D'}(0)-\langle \nu,g\rangle <0$ and indeed this has often happened in our calculations. 
\end{remark}

We implemented Steps \ref{it:1n} and \ref{it:1c} of
Algorithm~\ref{alg:numerical-search}, which determine the weights in
the measure \eqref{eq:defnu}, in Mathematica, as it is well suited to
computing $\nu_\e$ as given by \eqref{eq:defX}.  We note that, due to
the complexity of this computation, we were unable to apply our
algorithm to look for counterexamples with $N\geq 6$. Moreover, for
$N=3$, it follows from the work of
\textsc{Sebestyén}--\textsc{Székelyhidi} \cite{Sebestyen2017} that the
admissible weights form a line segment in $\R^{8}$, so it is enough
to look for counterexamples at the endpoints. Only for $N=4,5$ do we,
\textit{a priori}, actually require $N_n\times N_c$ to be large in
order to have a good sampling of the parameter space. 

The bulk of Algorithm~\ref{alg:numerical-search}, i.e.\ Steps
\ref{it:1a}-\ref{it:5}, was implemented in the C programming
language. Our implementation is quite fast for $m=2$:
for instance with $L=25$, $\#\mc{D}=64$ and $M_g=50$, it typically
takes around 3 minutes to perform Steps \ref{it:3}-\ref{it:4}, even
when Step \ref{it:4} is performed the maximum number of times. For
$m>2$, the algorithm has a very large computational cost: for instance,
with $L=19$ and $\#\mc{D}=168$, it typically takes around 13 hours to
perform Steps \ref{it:3}-\ref{it:4} a number $M_g=50$ times. We
remark that in this case the number of points in the grid is approximately $47\times 10^6$.

\subsection{The case $m=2$}

For $m=2$ we considered deformations given by sums of $N$ plane
waves with $N=3,4,5$.

For $N=3$, we verified numerically the analytical result of
\cite{Sebestyen2017}. Using a gridsize of $L=25$, we selected a total
of 210 measures and randomised 50 different functions $g$, which were
rank-one convexified using $\# \mc{D}=64$ rank-one directions, see Table~\ref{tab:numerical-m2}. About 5\% of the pairs $(\nu,g)$ were found to be suspicious, though none above the threshold $\gamma=0.1$. Upon rescaling the grid to $L'=49$ and increasing the set of rank-one directions to $\# \mc{D}'=256$, all but one of these pairs was shown to satisfy Jensen's inequality; the remaining potential counterexample was ruled out by rescaling the grid again to $L'=97$ and increasing $\# \mc{D}'=784$.

It is for $N=4,5$, where Question~\ref{qu:Morrey} is open, that our
results are most interesting. As the structure of the weights in these
cases is unknown, we consider a much larger set of measures,
around 1000, in our numerical tests; we have also increased
the maximum number of functions $g$ to test to $100\times 2^N$, see
Table~\ref{tab:numerical-m2}. We have found that, when compared to a
run for $N=3$ with the same $L$ and $\mc D$, in the case $N=4,5$
there is a drastic
decrease in the percentage of suspicious measures initially flagged by
Algorithm \ref{alg:numerical-search}: when using a gridsize of $L=25$ and $\# \mc{D}=64$
rank-one directions, for example, only 0.06\% of the pairs $(\nu,g)$
are found suspicious when $N=4$ and none are flagged in this way when
$N=5$. From the point of view of our algorithm, Jensen's inequality is
clearly easier to verify as $N$ increases, at least within the range
of $N$ we test, which could be explained by the increase in
size of the 2nd lamination convex hull, c.f.\
Section~\ref{sec:counting}.
None of the pairs $(\nu,g)$ flagged as suspicious was found to be a counterexample
after rescaling the grid to $L'=49$ and increasing the set of rank-one
directions to $\# \mc{D}'=256$.
We also tested configurations generated randomly in finer grids, having obtained
identical results to the case $L=25$.

To summarize: after testing thousands of randomly generated measures and hundreds of randomly generated functions, we have not found any counterexamples to Question~\ref{qu:Morrey}.

\begin{table}[htbp]
\centering
\begin{tabular}{|c|c|c|c||c|c|}\hline
$N$ & $M_n$ & $M_c$ & $M_a$ &  $M_\nu$ & $M_g$\\ \hline 
3   & 7     & 1     & 30    &  210     & 50   \\ \hline 
4   & 7     & 7     & 20    &  980     & 160  \\ \hline 
5   & 7     & 7     & 20    &  980     & 320  \\ \hline 
\end{tabular}
\caption{Parameter space sampled in numerical experiments with $m=2$.}
\label{tab:numerical-m2}
\end{table}

\subsection{The case $m=3$}

For $m=3$ and $N=3$, let us conseider directions $(n_1,n_2,n_3)$ which are non-degenerate in the sense that, for some choice of phases, there is $\e\in \{-1,1\}^3$ with $\nu_\e\neq \frac 1 8$. It follows from the example in \cite{Sverak1992a} that, with probability one, any such measure is a counterexample to Question~\ref{qu:Morrey}.
Due to the high computational cost of Algorithm~\ref{alg:numerical-search} for $m=3$, which severely limits our ability to explore the parameter space, we decided to focus only on $N=3$ and attempt to recover these analytic results.

With a grid of size
$L=19$ and $\#\mc D = 168$, over the course of
two weeks, we tested around 30 measures corresponding to $3$-wave deformations. All but one measure was found to be suspicious and around 90\% of the measures were found to be sufficiently suspicious, in the sense that there was one rank-one convexified function for which Jensen's inequality failed by a margin superior to the threshold of $\gamma =0.1$. We were unable to verify how many of our candidate counterexamples would survive after rescaling the grid (Step~\ref{it:5} of Algorithm~\ref{alg:numerical-search}), as those computations would take around a month per measure. However, these results are in agreement with what is known analytically for $N=3$, further validating our implementation of Algorithm~\ref{alg:numerical-search}.

\subsection{The case $m> 3$}

It is interesting to consider  \textsc{Grabovsky}'s
example \cite{Grabovsky2018} of a rank-one convex, non quasiconvex
function $G\colon\R^{8\times 2}\to \R$. $G$ is quasiconvex at zero,
although not at the point $\tp{Id}_{\mb H^2}\equiv e_1\otimes e_1+e_5\otimes e_2$. However,
the paper \cite{Grabovsky2018} does not give an explicit deformation falsifying
the quasiconvexity inequality (\ref{eq:qc}); the deformation is only
obtained indirectly through the variational principle for the effective
tensor in periodic homogenization.

Due to its
complicated definition it seems difficult to decide analytically
whether $G$ is $N$-wave quasiconvex at $\tp{Id}_{\mb H^2}$ for specific values of
$N>2$. After testing hundreds of different configurations and finding
no counter-example to Jensen's inequality we are led
to suppose that $G$ is $N$-wave
quasiconvex for $N\leq 5$. We note, however, that there
is a curious similarity between the plane-wave expansions of
Section \ref{sec:deformations} and \cite[equation (2.18)]{Grabovsky2018}.

{\footnotesize
\bibliographystyle{acm}

\bibliography{/Users/antonialopes/Dropbox/Oxford/Bibtex/library.bib}
}

\end{document}